\documentclass{article}
\usepackage[latin1]{inputenc}
\usepackage[small]{titlesec}
\usepackage{amsmath}
\usepackage{amsthm}
\usepackage{comment}
\usepackage{amssymb}
\usepackage{graphicx}
\usepackage{standalone}
\usepackage{color}
\theoremstyle{plain}
\newtheorem{thm}{Theorem}[section]
\newtheorem{lem}[thm]{Lemma}
\newtheorem{prop}[thm]{Proposition}
\newtheorem{cor}[thm]{Corollary}

\newtheorem{rem}[thm]{Remark}
\newtheorem{prob}[thm]{Problem}
\newtheorem{defn}[thm]{Definition}

\usepackage[small]{titlesec}
\usepackage{blindtext}

	\title{Classification of Deza graphs from anisotropic association schemes of quadrics}
\author{Valentino Smaldore \footnote{Valentino Smaldore:
valentino.smaldore@unipd.it 
Dipartimento di Tecmica e Gestione dei Sistemi Industriali,
Universit\`{a} degli Studi di Padova, Stradella San Nicola 2, 36100 Vicenza, Italy.}}

\date{}

\begin{document}
\maketitle
\begin{abstract}
Let $Q^\varepsilon(3,q)$, where
$\varepsilon\in\{+,-\}$ and $q>3$ is odd, be a non-degenerate hyperbolic or elliptic quadric of $PG(3,q)$. Fix one of the two quadratic classes of anisotropic points. Since the line joining two distinct points of this class is tangent, secant, or external to the quadric, one obtains a $3$-class association scheme. We classify all non-trivial unions of its relations which define Deza graphs. In addition to the previously known tangency family, exactly four exceptional strictly Deza graphs occur, with parameters $(360,135,54,45)$, $(369,108,36,27)$, $(65,34,18,15)$ and $(168,111,75,70)$. We determine their spectra and Deza children and give geometric or group-theoretic descriptions of all four exceptional graphs. 
\end{abstract}

\section{Introduction}

Strongly regular graphs arising from finite classical polar spaces form one of the standard bridge between finite geometry and algebraic combinatorics. Natural weakening of strong regularity is given by Deza
graphs. A $k$-regular graph on $v$ vertices is a Deza graph with parameters $(v,k,b,a)$ if the number of common neighbours of two distinct vertices assumes exactly two values $a$ and $b$, where $b\geq a$, see \cite{EricksonEtAl1999} for more information. If the graph has diameter two and is not strongly regular, it is usually called \emph{strictly Deza}. 
Association schemes provide a natural source of such graphs. If $\mathfrak X=(X,\{R_0,R_1,\ldots,R_d\})$ is a symmetric $d$-class association scheme with intersection numbers $p_{ij}^h$, and if $S\subseteq\{1,\ldots,d\}$, then the graph whose adjacency matrix
is $A_S=\sum_{i\in S}A_i$ has $\sum_{i,j\in S}p_{ij}^{h}$ common neighbours for any pair of vertices belonging to the relation $R_h$. Thus a union of relations is a Deza graph precisely when these quantities assume at most two values as $h$ varies. This point of view has already proved useful in the study of Deza graphs arising from $3$-class schemes; see, for instance, \cite{GoryainovShalaginov2021}. Here, we consider the association scheme arising from anisotropic points of the quadrics $Q^\varepsilon(3,q)\subseteq PG(3,q)$, $\varepsilon\in\{+,-1\}$, where $\varepsilon=+$ denotes the hyperbolic case and
$\varepsilon=-1$ the elliptic case. We fix one quadratic class of anisotropic points. Two such points determine a line which is tangent, secant, or external to the quadric, giving three non-trivial relations. The association schemes on anisotropic points of quadrics had been intensively studied. In particular, character tables for schemes arising from
finite orthogonal groups were obtained by Bannai, Hao and Song \cite{BannaiHaoSong1990}.  
The dimension-three elliptic case is closely
connected with inversive planes; see \cite{FisherEtAl1989}. A recent uniform treatment of the schemes on anisotropic points of finite quadrics was given by Adriaensen and De Boeck \cite{AdriaensenDeBoeck2025}.

The graph corresponding to the tangent relation is already known to be a Deza graph. In the language of finite circle geometries, Adriaensen proved that the relevant component of the $1$-intersecting graph has
exactly two possible numbers of common neighbours
\cite[Lemma~4.6]{Adriaensen2022}. More recently, Cuypers considered graphs on non-isotropic points in which two vertices are adjacent when they span a tangent line, calling them \emph{orthogonal graphs}
\cite{Cuypers2026}.

Our main purpose is to classify all Deza graphs obtained as non-trivial unions of relations of this $3$-class anisotropic association scheme. The tangency relation is already known to yield a Deza graph, and is therefore regarded here as the motivating example. The new contribution is the complete classification of all such unions. Besides the known tangency family, exactly four exceptional strictly Deza graphs occur. We further determine their spectra and Deza children and give geometric or group-theoretic descriptions of all four exceptional cases.

\section{Preliminaries}

\subsection{Deza graphs}

\begin{defn}
A graph $\Gamma$ is a \emph{Deza graph} with parameters $(v,k,b,a)$ if it has $v$ vertices, is $k$-regular, and every two distinct vertices have either $a$ or $b$ common neighbours, where $b\geq a$.
\end{defn}

A strongly regular graph is therefore a Deza graph, but in a strongly regular graph the number of common neighbours is prescribed by whether the two vertices are adjacent. This need not be true for a general
Deza graph.

\begin{defn}
A Deza graph is called \emph{strictly Deza} if it has diameter two and is not strongly regular.
\end{defn}

\begin{defn}
Let $\Gamma$ be a Deza graph with parameters $(v,k,b,a)$, where $b\geq a$. For distinct vertices $x,y\in V(\Gamma)$, define two graphs $\Gamma_a$ and $\Gamma_b$ on the same vertex set by
\[
x\sim_{\Gamma_a} y
\quad\Longleftrightarrow\quad
|N_\Gamma(x)\cap N_\Gamma(y)|=a,
\]
and
\[
x\sim_{\Gamma_b} y
\quad\Longleftrightarrow\quad
|N_\Gamma(x)\cap N_\Gamma(y)|=b.
\]
The graphs $\Gamma_a$ and $\Gamma_b$ are called the
\emph{children} of $\Gamma$.
\end{defn}

\begin{rem}
The two children are complementary graphs, since every pair of distinct vertices of $\Gamma$ has either $a$ or $b$ common neighbours.
\end{rem}

\subsection{Association schemes}

Given a set $X$, the concept of an \textit{association scheme} is introduced to represent certain relations between pairs of elements in $X$. From now on, the set $X$ is assumed to be finite.
\begin{defn}
   For a set $X$ and the Cartesian product
   $$X\times X=\{(\alpha,\beta)|\alpha,\beta\in X\},$$
   an association scheme with $d$ associate classes on $X$ is a partition of $X\times X$ into $d+1$ associate classes (also called relations) $R_{0}, R_{1},\ldots, R_{d}$ such that:
   \begin{enumerate}
    \item $R_{0}=Diag(X)=\{(\alpha,\alpha)|\alpha\in X\}$;
    \item for all $i$ in $\{0,1,\ldots, d\}$, $R_{i}$ is symmetric, i.e. $(\alpha,\beta)\in R_{i}$ if and only if $(\beta,\alpha)\in R_{i}$;
    \item for all $i,j,k$ in $\{0,1,\ldots,d\}$ there exists an integer $p_{ij}^{k}$ such that, for all $(\alpha,\beta)\in R_{k}$:
         $$|\{\gamma\in X|(\alpha,\gamma)\in R_{i} \wedge (\gamma,\beta)\in R_{j}\}|=p_{ij}^{k}.$$
   \end{enumerate}
  \end{defn}
  The symmetry condition also says $p_{ij}^{0}=0$ for $i\neq j$. Similarly $p_{i0}^{k}=0$ for $i\neq k$ and $p_{0j}^{k}=0$ for $j\neq k$, while $p_{i0}^{i}=1=p_{0i}^{i}$. The number $p_{ii}^{0}=a_{i}$ is called the \textit{valency} of the $i$-th associate class.

\section{The anisotropic scheme of a quadric}

Let $q>3$ be an odd prime power, and let $Q=Q^\varepsilon(3,q)$ be a non-degenerate quadric of $PG(3,q)$, where $\varepsilon\in\{+,-\}$.  Let $\beta$ be a quadratic form defining
$Q$. The anisotropic points are split into two quadratic classes according to the square class of $\beta(X)$. Fix one of these classes and denote it by $\mathcal P$.

For two distinct points $X,Y\in\mathcal P$, define
\[
\begin{aligned}
 (X,Y)\in R_1
 &\quad\Longleftrightarrow\quad
 |\langle X,Y\rangle\cap Q|=1,\\
 (X,Y)\in R_2
 &\quad\Longleftrightarrow\quad
 |\langle X,Y\rangle\cap Q|=2,\\
 (X,Y)\in R_3
 &\quad\Longleftrightarrow\quad
 |\langle X,Y\rangle\cap Q|=0.
\end{aligned}
\]
Thus, $R_1$, $R_2$, and $R_3$ correspond, respectively, to tangent, secant, and external lines. Together with the diagonal relation $R_0$, these relations form a symmetric $3$-class association scheme on $\mathcal{P}$; see \cite{AdriaensenDeBoeck2025}. The number of vertices is $v=|\mathcal{P}|=\frac{q(q^2-\varepsilon)}{2}$. Let $A_i$ be the adjacency matrix of $R_i$. With the present labeling, the valencies are as follows. \begin{equation}\label{eq:valencies}
 k_1=q^2-1,\qquad
 k_2=\frac14q(q+\varepsilon)(q-3),\qquad
 k_3=\frac14q(q-\varepsilon)(q-1).
\end{equation}

For every non-empty subset $S\subseteq\{1,2,3\}$, define the graph
\[
 \Gamma_S=(\mathcal{P},E_S),
 \qquad
 E_S=\bigcup_{i\in S}R_i,
\]
with adjacency matrix $A_S=\sum_{i\in S}A_i$.

We shall use the following standard criterion for Deza graphs arising
from association schemes; see \cite[Theorem~4.2]{EricksonEtAl1999}.

\begin{lem}\label{lem:deza-criterion}
For $h\in\{1,2,3\}$, set $c_h(S)=\sum_{i,j\in S}p_{ij}^h$. If $(X,Y)\in R_h$, then $X$ and $Y$ have exactly $c_h(S)$ common neighbours in $\Gamma_S$. Consequently, $\Gamma_S$ is a Deza graph if and only if $|\{c_1(S),c_2(S),c_3(S)\}|\leq2$.
\end{lem}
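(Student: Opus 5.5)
The plan is a double count over the elements $Z\in\mathcal P$ adjacent to both endpoints of a fixed pair. Fix $h\in\{1,2,3\}$ and $(X,Y)\in R_h$, and write $N_S(X)$ for the neighbourhood of $X$ in $\Gamma_S$. Since the relations $R_1,R_2,R_3$ (together with $R_0$) partition $\mathcal P\times\mathcal P$, a vertex $Z$ lies in $N_S(X)$ if and only if $(X,Z)\in R_i$ for exactly one $i\in S$. Consequently
\[
 N_S(X)\cap N_S(Y)=\bigsqcup_{i,j\in S}\{Z\in\mathcal P : (X,Z)\in R_i,\ (Z,Y)\in R_j\},
\]
and this union is disjoint: each $Z$ determines the pair $(i,j)$ uniquely, because the relations are disjoint and $R_i$ is symmetric, so $(Y,Z)\in R_j$ is equivalent to $(Z,Y)\in R_j$. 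We also need $X,Y\notin N_S(X)\cap N_S(Y)$. This holds because $0\notin S$ and $(X,X)\in R_0$, which means $X$ is not adjacent to itself, and likewise for $Y$.

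By axiom 3 of the association scheme, applied to $(X,Y)\in R_h$, the $(i,j)$-piece has exactly $p_{ij}^h$ elements, independently of the choice of the pair. Summing over $i,j\in S$ therefore gives $|N_S(X)\cap N_S(Y)|=c_h(S)$, which is the first claim. As a side remark, $\Gamma_S$ is regular of valency $\sum_{i\in S}k_i$, because the valencies are constant.

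For the equivalence, note that every pair of distinct vertices lies in exactly one of $R_1,R_2,R_3$. The multiset of common-neighbour counts is therefore $\{c_h(S): h\in\{1,2,3\},\ R_h\neq\emptyset\}$. All three relations are non-empty because each valency in \eqref{eq:valencies} is positive for $q>3$; in particular $k_2>0$ since $q-3>0$. Hence the set of values taken by $|N_S(X)\cap N_S(Y)|$ is exactly $\{c_1(S),c_2(S),c_3(S)\}$.

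$\Gamma_S$ is a Deza graph exactly when this set has at most two elements. If it has one element, we take $a=b$. If it has two, we call the larger $b$ and the smaller $a$. Combined with regularity, this gives the stated criterion.

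No step is a real obstacle. The only points that need care are the disjointness of the decomposition, which uses symmetry of the relations, and the non-emptiness of the $R_h$, which is needed so that every $c_h(S)$ is actually attained.
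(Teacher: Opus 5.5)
Your proof is correct and is essentially the paper's argument: the paper writes $(A_S^2)_{XY}=\sum_{i,j\in S}(A_iA_j)_{XY}=\sum_{i,j\in S}p_{ij}^h$, which is the matrix form of your disjoint decomposition of $N_S(X)\cap N_S(Y)$. Your extra checks supply details the paper leaves implicit: regularity of valency $\sum_{i\in S}k_i$ for the ``if'' direction, and non-emptiness of $R_1,R_2,R_3$ for $q>3$ for the ``only if'' direction.
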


\begin{proof}
The $(X,Y)$-entry of $A_iA_j$ is the number of vertices $Z$ such that $(X,Z)\in R_i$ and $(Z,Y)\in R_j$.  If $(X,Y)\in R_h$, this number is
$p_{ij}^{h}$. Hence \[
 (A_S^2)_{XY}
 =\sum_{i,j\in S}(A_iA_j)_{XY}
 =\sum_{i,j\in S}p_{ij}^{h},
\]
which proves the claim.
\end{proof}

\section{The tangency graph}
We first recall the known infinite family arising from the tangent relation.

\begin{prop}\label{prop:tangent}
Let $S=\{1\}$. Then $\Gamma_{\{1\}}$ is a Deza graph with parameters
\[
 \left(
 \frac{q(q^2-\varepsilon)}2,\,
 q^2-1,\,
 2(q+1),\,
 2(q-1)
 \right).
\]
More precisely,
\[
 p_{11}^{1}=2(q-1),\qquad
 p_{11}^{2}=2(q-\varepsilon),\qquad
 p_{11}^{3}=2(q+\varepsilon).
\]
Thus, for $\varepsilon=+$,
\[
 c_1=c_2=2(q-1),\qquad c_3=2(q+1),
\]
whereas, for $\varepsilon=-$,
\[
 c_1=c_3=2(q-1),\qquad c_2=2(q+1).
\]
For $q>3$ the graph is not strongly regular.
\end{prop}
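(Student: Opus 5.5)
The plan is to compute $p_{11}^h$ directly, by counting the points $Z\in\mathcal P$ with $XZ$ and $ZY$ both tangent, for a fixed pair $(X,Y)\in R_h$. Once these three numbers are known, Lemma~\ref{lem:deza-criterion} with $S=\{1\}$ gives $c_h(\{1\})=p_{11}^h$. The parameters and the non-strong-regularity then follow at once.

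\textbf{Step 1: tangent lines through a point of $\mathcal P$.} Fix $X\in\mathcal P$. The tangent lines through $X$ are the lines $XP$ with $P\in Q\cap X^\perp$. Since $X$ is anisotropic, $X^\perp$ is a plane meeting $Q$ in a non-degenerate conic $C_X$ with $q+1$ points. Each such line $XP$ is totally singular only at $P$. Its $q-1$ anisotropic points other than $P$ and $X$ alternate in square class: writing $Z=X+tP$ gives $\beta(Z)=\beta(X)$ for all $t$, since $X\perp P$ and $\beta(P)=0$. So every point of $XP\setminus\{P\}$ lies in $\mathcal P$, giving $q-1$ points besides $X$. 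This yields $k_1=(q+1)(q-1)=q^2-1$, which checks against \eqref{eq:valencies}.

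\textbf{Step 2: reduce $p_{11}^h$ to a conic count.} Take $(X,Y)\in R_h$. A common tangent neighbour $Z$ lies on a tangent line $XP$ with $P\in C_X$ and on a tangent line $YP'$ with $P'\in C_Y$. For each pair $(P,P')$ with $XP$ and $YP'$ coplanar and distinct, the two lines meet in at most one point. We need that point to be anisotropic and different from $X$ and $Y$; it is then automatically in $\mathcal P$ by Step 1.

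Two cases arise.
\begin{itemize}
\item If $P=P'$, then $P\in C_X\cap C_Y = Q\cap\ell^\perp$, where $\ell=XY$. The two lines meet at $P$ itself, which is isotropic, so this contributes nothing.
\item If $P\neq P'$, the lines are coplanar exactly when $P'$ lies in the plane $\langle X,Y,P\rangle$.
\end{itemize}
A cleaner way to count is to pass to the line $\ell^\perp$, using $X^\perp\cap Y^\perp=\ell^\perp$. I would work in coordinates adapted to $\ell$ and $\ell^\perp$: $\ell$ is tangent, secant or external, and $\ell^\perp$ is then respectively tangent, of the same type as $\ell$ in the hyperbolic case, and swapped in the elliptic case. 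This type swap is precisely what produces the $\varepsilon$ in the statement.

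For each plane $\pi$ through $\ell$, the section $\pi\cap Q$ is a conic, possibly degenerate. The tangent lines from $X$ and from $Y$ inside $\pi$ are the tangents of that conic through $X$ and $Y$. Every common neighbour of $X$ and $Y$ lies in a unique such plane. So
\[
p_{11}^h=\sum_{\pi\supset\ell} N(\pi),
\]
where $N(\pi)$ counts the intersections of a tangent through $X$ with a tangent through $Y$, other than $X$, $Y$ and points of $Q$.

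\textbf{Step 3: evaluate $N(\pi)$.}
\begin{itemize}
\item For a non-degenerate conic in $\pi$ with $X,Y$ external points, there are two tangents from each point, giving four intersections. Up to two of these may be conic points (namely when a tangent is shared), or may coincide with $X$ or $Y$ (when $\ell$ itself is a tangent).
\item The $q+1$ planes through $\ell$ split according to whether they are tangent planes to $Q$. Tangent planes meet $Q$ in two lines (hyperbolic) or one point (elliptic). Their number is $|\ell^\perp\cap Q|$, i.e.\ $0$, $1$ or $2$ according to the type of $\ell^\perp$.
\end{itemize}
Summing, one expects a count of $4$ per non-degenerate plane with corrections. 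Since there are roughly $q\pm1$ such planes and each contributes $2$ after the corrections, the totals $2(q-1)$ and $2(q\pm\varepsilon)$ should emerge.

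The main obstacle is this case analysis. I would first treat the generic case $h=3$ or $h=2$, where $\ell$ is not tangent and every plane through $\ell$ is tangent to $Q$ at most at the points of $\ell^\perp$. There I would check carefully, using the quadratic character of $\beta$ on $\ell$, that exactly two of the four tangent intersections in a non-degenerate plane are anisotropic. The degenerate planes would then be handled separately. If the geometric bookkeeping becomes unwieldy, a fallback is to verify the formulas using the eigenvalue/character table of \cite{BannaiHaoSong1990,AdriaensenDeBoeck2025}, through $A_1^2=k_1I+\sum_h p_{11}^hA_h$.

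\textbf{Step 4: conclusion.} Substituting $\varepsilon=\pm$ shows that two of the $c_h$ coincide and the third differs by $4$. This gives the Deza parameters with $b=2(q+1)$ and $a=2(q-1)$. The graph is not strongly regular because the value of $c_h$ is not determined by adjacency: $c_1=c_{h'}$ for one non-adjacent relation $h'$ while the other differs. This requires $c_1\ne c_3$ (respectively $c_1\ne c_2$), which holds always. Finally, both non-adjacent relations must be non-empty, i.e.\ $k_2,k_3>0$, which needs $q>3$ by \eqref{eq:valencies}.
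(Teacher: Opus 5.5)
\textbf{A genuinely different route, with two gaps.} The paper does not derive $p_{11}^h$. It takes them from Adriaensen--De Boeck, and the Deza property from Adriaensen's Lemma~4.6. That is essentially your fallback. A direct count is feasible and would make the proposition self-contained, but Step~3 rests on a wrong picture of the plane sections, and the $h=1$ count misses a term.

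\textbf{First gap: internal points.} In a non-degenerate plane $\pi\supset\ell$, the points $X,Y$ need not be external to the conic $\pi\cap Q$. Suppose $\ell$ is not tangent. Each of the $q+1$ tangent lines through $X$ lies in exactly one plane through $\ell$. A tangent plane contains exactly one of them, and a non-degenerate plane contains $0$ or $2$. Hence $X$ is external in exactly $(q+1-t)/2$ of the non-degenerate planes, where $t=|\ell^\perp\cap Q|\in\{0,2\}$, and internal in the other $(q+1-t)/2$.

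Your expectation that ``exactly two of the four intersections are anisotropic'' is also false. When both points are external, all four intersections are valid common neighbours. A shared tangent line would have to be $\ell$. A shared tangent point $W$ would lie in $\ell^\perp\cap Q$, and the only plane through $\ell$ containing such a $W$ is the tangent plane $W^\perp$. So a non-degenerate plane contributes $4$ or $0$, never $2$. A tangent plane contributes $0$, since $XW\cap YW=W\in Q$.

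Your average of $2$ per plane comes out right only if $X$ and $Y$ are external in \emph{the same} planes, and you give no reason for this. The missing lemma is as follows. $X$ is external to $\pi\cap Q$ iff its polar line $X^\perp\cap\pi$ is hyperbolic. Modulo squares, $\operatorname{disc}(\pi)$ equals a fixed constant times $\beta(X)\operatorname{disc}(X^\perp\cap\pi)$. So the external/internal status of $X$ in $\pi$ depends only on the square class of $\beta(X)$. This is exactly where the hypothesis $X,Y\in\mathcal P$ enters; for points of opposite classes the count would be $0$. With the lemma you get $p_{11}^h=2(q+1-t)$ for $h=2,3$. Your type analysis of $\ell^\perp$ then gives $2(q-\varepsilon)$ and $2(q+\varepsilon)$ respectively.

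\textbf{Second gap: the case $h=1$.} Your claim that every common neighbour lies in a unique plane through $\ell$ fails here. Let $P_0=\ell\cap Q$. The $q-2$ points $Z\in\ell\setminus\{X,Y,P_0\}$ are common neighbours, because $XZ=YZ=\ell$ is tangent. These lie in every plane through $\ell$ and are not counted by your $N(\pi)$. The planes themselves give only $q$:
\begin{itemize}
\item the tangent plane $P_0^\perp\supset\ell$ contributes nothing, since its only tangent through $X$ is $\ell$;
\item each of the other $q$ planes contributes exactly one point, the intersection of the second tangents from $X$ and $Y$.
\end{itemize}
Thus $p_{11}^1=q+(q-2)=2(q-1)$; without the collinear term you would get $q$.

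\textbf{Minor points.} In Step~1, ``alternate in square class'' contradicts your own correct computation $\beta(X+tP)=\beta(X)$. Your closing argument for non-strong-regularity is fine: $c_2\neq c_3$, and $k_2,k_3>0$ for $q>3$.
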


\begin{rem}
The Deza property of this graph is already present in the literature. In the circle-geometric formulation, it is explicitly stated in \cite[Lemma~4.6]{Adriaensen2022}. Hence, the tangency graph should be regarded as the motivating example rather than as a new construction.
\end{rem}

\section{Classification of the Deza unions}

We now apply Lemma~\ref{lem:deza-criterion} to every proper subset $S\subsetneq\{1,2,3\}$. The intersection numbers of the anisotropic scheme allow this condition to be solved completely.
\begin{thm}\label{thm:classification}
Let $q>3$ be an odd prime power and
$\varepsilon\in\{+,-\}$.  Let
$\varnothing\neq S\subsetneq\{1,2,3\}$.
Then $\Gamma_S$ is a Deza graph if and only if one of the following occurs:
\[
\begin{array}{c@{\qquad}c@{\qquad}c}
\hline
S & q & \varepsilon\\
\hline
\{1\}   & \text{arbitrary} & \pm\\
\{2\}   & 9                & \pm\\
\{1,2\} & 5                & -\\
\{1,3\} & 7                & +\\
\hline
\end{array}
\]
The first row is the known tangency family.  The remaining four graphs have parameters
\[
\begin{aligned}
 \Gamma_{\{2\}}\bigl(Q^+(3,9)\bigr)
   &:\quad (360,135,54,45),\\
 \Gamma_{\{2\}}\bigl(Q^-(3,9)\bigr)
   &:\quad (369,108,36,27),\\
 \Gamma_{\{1,2\}}\bigl(Q^-(3,5)\bigr)
   &:\quad (65,34,18,15),\\
 \Gamma_{\{1,3\}}\bigl(Q^+(3,7)\bigr)
   &:\quad (168,111,75,70).
\end{aligned}
\]
\end{thm}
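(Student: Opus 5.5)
The plan is to reduce everything to Lemma~\ref{lem:deza-criterion}. First we need the full table of intersection numbers $p_{ij}^h$, $i,j,h\in\{1,2,3\}$, for the anisotropic scheme, as explicit polynomials in $q$ with $\varepsilon$ as a parameter. I would take these from \cite{AdriaensenDeBoeck2025}, or compute them directly. For the direct computation, fix $X,Y\in\mathcal P$ with $(X,Y)\in R_h$ and count points $Z\in\mathcal P$ by the type of the lines $XZ$ and $YZ$.

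The known values $p_{11}^h$ from Proposition~\ref{prop:tangent} anchor the computation. The remaining numbers then follow from two standard identities. The first is the row-sum identity $\sum_j p_{ij}^h=k_i-\delta_{ih}$, using the valencies \eqref{eq:valencies}. The second is the symmetry $k_h p_{ij}^h=k_i p_{hj}^i$. Together these leave only a small number of genuinely geometric counts, say $p_{12}^h$ for $h=1,2,3$. These can be obtained by working in the plane $\langle X,Y,Z\rangle$, which meets $Q$ in a conic. Alternatively, one can use tangent cones: the points $Z$ with $XZ$ tangent are those on the cone from $X$ over the conic $X^\perp\cap Q$.

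With the table in hand, I would compute $c_h(S)$ for the six proper subsets. It suffices to treat $S$ and its complement separately, because $A_{S^c}=J-I-A_S$. This gives
\[
A_{S^c}^2=(v-2)J+I+2A_S+A_S^2-2k_SJ+\dots,
\]
so the relation $c_h(S^c)=v-2k_S+c_h(S)+2[h\in S]$ holds. Hence $c_h(S^c)$ differs from $c_h(S)$ by a constant plus $2\cdot[h\in S]$.

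For each $S$, the condition $|\{c_1,c_2,c_3\}|\le 2$ means that at least one of the three differences $c_1-c_2$, $c_1-c_3$, $c_2-c_3$ vanishes. Each difference is a polynomial in $q$ of degree at most 2, with the sign $\varepsilon$ fixed. So each of the $6\times2\times3$ cases reduces to finding the odd prime powers $q>3$ that are roots of an explicit low-degree polynomial. Either the polynomial vanishes identically (the tangency family $\{1\}$), or it has at most two roots, which I would find by the rational root test or the quadratic formula. This should produce exactly $q=9$ for $\{2\}$ with both signs, $q=5$ with $\varepsilon=-$ for $\{1,2\}$, and $q=7$ with $\varepsilon=+$ for $\{1,3\}$. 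All other roots should be non-integral, even, or $\le3$.

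Finally, I would substitute these values into \eqref{eq:valencies} and the $c_h$ to read off the parameters $(v,k,b,a)$. As a check, $q=9$, $\varepsilon=+$ should give $v=360$ and $k_2=135$.

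The main obstacle is obtaining correct closed forms for all $p_{ij}^h$ in both the hyperbolic and elliptic cases. The case analysis afterwards is mechanical. An error in a single secant/external count would corrupt the root-finding. I would therefore cross-check the table against the consistency identities above, and numerically against a small case such as $q=5$ by direct enumeration.
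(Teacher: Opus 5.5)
Your plan is essentially the paper's proof. The paper likewise takes the $p_{ij}^h$ from Adriaensen--De Boeck and, for each $S$, writes out the differences $c_1-c_2$, $c_1-c_3$, $c_2-c_3$ as explicit quadratics in $q$, which is precisely the computation you defer with ``should produce''. Your complement relation is consistent with those formulas: the paper's differences for $\{2,3\},\{1,3\},\{1,2\}$ are exactly those for $\{1\},\{2\},\{3\}$ shifted by $0$ or $\pm2$, though the constant is $v-2-2k_S$ rather than $v-2k_S$, which is harmless since only differences matter.

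If you compute the numbers directly instead, note one correction. Once the $p_{11}^h$ are known, the row sums and $k_hp_{ij}^h=k_ip_{hj}^i$ already force $p_{12}^1$ and $p_{12}^3$, but they still leave two free parameters, say $p_{12}^2$ and $p_{22}^2$. So you need a second geometric count, or extra input such as associativity, not just the $p_{12}^h$.
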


\begin{proof}
By Lemma~\ref{lem:deza-criterion}, for each $S$ it suffices to determine when at least two of $c_1(S)$,$c_2(S)$ and $c_3(S)$ coincide. We substitute the intersection numbers of the $3$-class
subscheme obtained from \cite[Lemmas~4.12, 4.14 and 4.15]{AdriaensenDeBoeck2025}.
For $S=\{1\}$, the result follows from Proposition~\ref{prop:tangent}. For $S=\{2\}$, the three pairwise differences simplify to
\[
\begin{aligned}
 c_1-c_2&=-\varepsilon\frac{(q-9)(q+1)}8,\\
 c_1-c_3&= \varepsilon\frac{(q-3)(q+3)}8,\\
 c_2-c_3&= \varepsilon\frac{q^2-4q-9}{4}.
\end{aligned}
\]
For an odd prime power $q>3$, the only admissible zero is $q=9$ in the first equation. Hence $\Gamma_{\{2\}}$ is Deza precisely when $q=9$, for either value of $\varepsilon$.

For $S=\{3\}$, we obtain
\[
\begin{aligned}
 c_1-c_2&=-\varepsilon\frac{(q-1)(q+1)}8,\\
 c_1-c_3&= \varepsilon\frac{q^2-8q-1}{8},\\
 c_2-c_3&= \varepsilon\frac{q^2-4q-1}{4}.
\end{aligned}
\]
The first expression never vanishes for $q>3$, while the roots of the other two quadratic polynomials are non-integral. Therefore, no graph $\Gamma_{\{3\}}$ is Deza.

For $S=\{1,2\}$, the differences are
\[
\begin{aligned}
 c_1-c_2&=-\varepsilon\frac{q^2-1}{8},\\
 c_1-c_3&=\frac{\varepsilon(q^2-8q-1)-16}{8},\\
 c_2-c_3&=\frac{\varepsilon(q^2-4q-1)-8}{4}.
\end{aligned}
\]
The first expression is non-zero. If $\varepsilon=-$, then $c_1-c_3=-\frac{(q-3)(q-5)}8$,
so the unique admissible value is $q=5$. If $\varepsilon=+$, neither of the remaining equations has an admissible integral solution.

For $S=\{1,3\}$, we obtain
\[
\begin{aligned}
 c_1-c_2&=-\frac{\varepsilon(q^2-8q-9)+16}{8},\\
 c_1-c_3&= \varepsilon\frac{q^2-9}{8},\\
 c_2-c_3&= \frac{\varepsilon(q^2-4q-9)+8}{4}.
\end{aligned}
\]
When $\varepsilon=+$, the first equation becomes $c_1-c_2=-\frac{(q-1)(q-7)}8$, and yields $q=7$. The other possible zero is givn by $q=3$, which is excluded, and the elliptic case gives no admissible solution.

Finally, for $S=\{2,3\}$ the differences collapse to
\[
 c_1-c_2=2\varepsilon,\qquad
 c_1-c_3=-2\varepsilon,\qquad
 c_2-c_3=-4\varepsilon,
\]
so the three common-neighbour numbers are always distinct.

Substitution at the four exceptional values gives respectively
\[
\begin{array}{c|c}
\Gamma & (c_1,c_2,c_3)\\
\hline
\Gamma_{\{2\}}(Q^+(3,9)) & (54,54,45)\\
\Gamma_{\{2\}}(Q^-(3,9)) & (27,27,36)\\
\Gamma_{\{1,2\}}(Q^-(3,5)) & (18,15,18)\\
\Gamma_{\{1,3\}}(Q^+(3,7)) & (75,75,70).
\end{array}
\]
Together with the valencies in \eqref{eq:valencies}, these give the stated parameters. 

\end{proof}

\begin{cor}
Apart from the known tangency family, exactly four Deza graphs arise
as non-trivial unions of relations of the anisotropic $3$-class scheme.
Their parameters are
\[
(360,135,54,45),\qquad
(369,108,36,27),\qquad
(65,34,18,15),\qquad
(168,111,75,70).
\]
\end{cor}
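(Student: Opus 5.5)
The corollary follows from Theorem~\ref{thm:classification}, once we make sure that ``non-trivial union'' means exactly a non-empty proper subset $S\subsetneq\{1,2,3\}$ and that each exceptional graph is a genuine Deza graph distinct from the tangency family. The full union $S=\{1,2,3\}$ gives the complete graph $K_v$; it is trivially strongly regular and is excluded by definition, as is $S=\varnothing$. So the non-trivial unions are precisely the six subsets handled in the theorem.

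\emph{Step 1.} Read off the theorem's table. The subsets $\{3\}$ and $\{2,3\}$ never give Deza graphs. The subset $\{1\}$ gives the tangency family of Proposition~\ref{prop:tangent} for every $q$. Apart from these, Deza graphs occur only for $\{2\}$ with $q=9$ and $\varepsilon=\pm$, for $\{1,2\}$ with $(q,\varepsilon)=(5,-)$, and for $\{1,3\}$ with $(q,\varepsilon)=(7,+)$. That is exactly four graphs.

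\emph{Step 2.} Recover the parameters from $v=q(q^2-\varepsilon)/2$, the valencies \eqref{eq:valencies}, and the values of $(c_1,c_2,c_3)$ tabulated at the end of the proof.
\begin{itemize}
\item For $q=9$, $\varepsilon=+$: $v=360$ and $k_2=9\cdot10\cdot6/4=135$.
\item For $q=9$, $\varepsilon=-$: $v=369$ and $k_2=9\cdot8\cdot6/4=108$.
\item For $q=5$, $\varepsilon=-$: $v=65$ and $k_1+k_2=24+10=34$.
\item For $q=7$, $\varepsilon=+$: $v=168$ and $k_1+k_3=48+63=111$.
\end{itemize}
Setting $b=\max c_h$ and $a=\min c_h$ in each case gives the four stated quadruples.

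\emph{Step 3.} Check that these four graphs are really ``apart from'' the tangency family. Each one has $S\neq\{1\}$, and its valency differs from $q^2-1$ for every odd $q$, so none of them is isomorphic to a tangency graph. This step is where I expect the only real subtlety. If the corollary is meant to assert \emph{strictly} Deza graphs, we must also exclude strong regularity. A Deza graph coming from a union of scheme relations is strongly regular exactly when $c_h(S)$ takes one value on the relations inside $S$ and the other value on those outside $S$. In all four cases the two relations sharing a common value of $c_h$ consist of one relation in $S$ and one not in $S$. For example, for $\{2\}$ with $q=9$, $\varepsilon=+$, we have $c_1=c_2$ with $1\notin S$ and $2\in S$. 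Hence none of the four graphs is strongly regular. Diameter two holds because both values $a$ and $b$ are positive, so any two vertices have a common neighbour.
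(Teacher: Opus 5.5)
Your proposal is correct and follows the paper: the corollary is read off directly from Theorem~\ref{thm:classification}, with the parameters obtained from $v=q(q^2-\varepsilon)/2$, the valencies \eqref{eq:valencies} and the tabulated triples $(c_1,c_2,c_3)$, all of which you verify correctly. Your extra checks are also correct but not needed for this statement: the valency comparison with $q^2-1$, the diameter-two argument, and the observation that each pair of relations with equal $c_h$ straddles $S$ and its complement, which is exactly the paper's argument for the next corollary (strictly Deza).
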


\begin{cor}
All four exceptional graphs are strictly Deza.
\end{cor}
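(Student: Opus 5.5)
Strictly Deza means diameter two and not strongly regular, so for each of the four exceptional graphs $\Gamma=\Gamma_S$ I will check these two properties directly from the data of Theorem~\ref{thm:classification}, using Lemma~\ref{lem:deza-criterion} to read off common-neighbour counts.

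\emph{Diameter two.} By Lemma~\ref{lem:deza-criterion}, two vertices $X,Y$ with $(X,Y)\in R_h$ have $c_h(S)$ common neighbours. In the table closing the proof of Theorem~\ref{thm:classification}, every one of the $c_h$ values is positive: $(54,54,45)$, $(27,27,36)$, $(18,15,18)$ and $(75,75,70)$. Hence any two distinct vertices, adjacent or not, have a common neighbour, so they are at distance at most two. The graph is not complete, since $S$ is a proper subset of $\{1,2,3\}$ and each valency in \eqref{eq:valencies} is positive for $q>3$. So the graph is connected with diameter exactly two.

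\emph{Not strongly regular.} Each graph has precisely two common-neighbour values, $a<b$. It is strongly regular only if the value $c_h$ depends solely on whether $h\in S$, that is, if $c_h$ is constant on $S$ and constant on its complement. I check each case against the table:
\begin{itemize}
\item For $S=\{2\}$ (both signs), $c_1=c_2$ with $2\in S$ and $1\notin S$. An adjacent pair and a non-adjacent pair thus share the value $c_1=c_2$, while $c_3$ differs. The adjacency-to-count map is then not well defined, since the non-adjacent relations $1$ and $3$ receive different values.
\item For $S=\{1,2\}$, $c_1=18\neq c_2=15$, yet both relations are adjacencies.
\item For $S=\{1,3\}$, $c_1=75\neq c_3=70$, yet both are adjacencies.
\end{itemize}
In every case the number of common neighbours is not determined by adjacency, so the graph is not strongly regular.

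The only subtle point is this non-strong-regularity argument. A graph could a priori be strongly regular with a different description, but strong regularity is an intrinsic property: it requires the count to be a function of adjacency. The table exhibits two adjacent pairs, or two non-adjacent pairs, with different counts, so this is immediate once the relations are read correctly. I expect no real obstacle beyond verifying that the relevant relations are non-empty, which holds because all valencies $k_i$ are positive for $q\in\{5,7,9\}$.
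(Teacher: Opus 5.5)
Your proposal is correct and follows the paper's proof: you get diameter two from the positivity of all three values $c_h(S)$ together with non-completeness. You exclude strong regularity because the two counts occur on the two non-adjacent relations $R_1,R_3$ when $S=\{2\}$, and on the two adjacent relations when $S=\{1,2\}$ or $S=\{1,3\}$; your added remark that every $R_h$ is non-empty (all $k_h>0$ for $q>3$) makes explicit a point the paper leaves implicit.
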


\begin{proof}
In each case both possible numbers of common neighbours are positive, so every two vertices are at distance at most two. Since the graphs are not complete, their diameter is two. Moreover, for the two graphs arising from $S=\{2\}$ the two intersection numbers occur among non-adjacent pairs, while for the graphs arising from $S=\{1,2\}$ and $S=\{1,3\}$ they occur among adjacent pairs. Hence none of the four graphs is strongly regular.
\end{proof}

\section{The exceptional graphs}\label{sec:identifications}

For ease of reference, the four exceptional graphs are collected in
Table~\ref{tab:exceptional}.

\begin{table}[ht]
\centering
\begin{tabular}{ccccc}
\hline
Quadric & Relations & $v$ & $k$ & $\{a,b\}$\\
\hline
$Q^+(3,9)$ & $R_2$         & $360$ & $135$ & $(45,54)$\\
$Q^-(3,9)$ & $R_2$         & $369$ & $108$ & $(27,36)$\\
$Q^-(3,5)$ & $R_1\cup R_2$ & $65$  & $34$  & $(15,18)$\\
$Q^+(3,7)$ & $R_1\cup R_3$ & $168$ & $111$ & $(70,75)$\\
\hline
\end{tabular}
\caption{The four exceptional strictly Deza graphs.}
\label{tab:exceptional}
\end{table}

\begin{prop}
The Deza children of the four exceptional graphs are as follows:
\[
\begin{array}{c|c|c}
\Gamma & a\text{-child} & b\text{-child}\\
\hline
\Gamma_{\{2\}}(Q^+(3,9))
    & R_3 & R_1\cup R_2\\
\Gamma_{\{2\}}(Q^-(3,9))
    & R_1\cup R_2 & R_3\\
\Gamma_{\{1,2\}}(Q^-(3,5))
    & R_2 & R_1\cup R_3\\
\Gamma_{\{1,3\}}(Q^+(3,7))
    & R_3 & R_1\cup R_2.
\end{array}
\]
\end{prop}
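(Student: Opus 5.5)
The plan is to read the children off Lemma~\ref{lem:deza-criterion} and the table of values $(c_1,c_2,c_3)$ computed at the end of the proof of Theorem~\ref{thm:classification}. By the lemma, two distinct vertices $X,Y$ with $(X,Y)\in R_h$ have exactly $c_h(S)$ common neighbours in $\Gamma_S$. Hence $\Gamma_a$ is the union of those $R_h$ with $c_h(S)=a$, and $\Gamma_b$ is the union of those $R_h$ with $c_h(S)=b$. Every pair of distinct vertices lies in exactly one of $R_1,R_2,R_3$, so the two children partition the non-diagonal relations and are complementary, as noted after the definition of children.

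I will treat the four cases in turn, each time recalling $b\ge a$. For $\Gamma_{\{2\}}(Q^+(3,9))$ we have $(c_1,c_2,c_3)=(54,54,45)$. So $a=45$ is attained only on $R_3$ and $b=54$ on $R_1\cup R_2$. For $\Gamma_{\{2\}}(Q^-(3,9))$ the triple is $(27,27,36)$. Here $a=27$ gives $R_1\cup R_2$ and $b=36$ gives $R_3$; the roles are swapped relative to the hyperbolic case. For $\Gamma_{\{1,2\}}(Q^-(3,5))$ the triple is $(18,15,18)$, so $a=15$ corresponds to $R_2$ and $b=18$ to $R_1\cup R_3$. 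For $\Gamma_{\{1,3\}}(Q^+(3,7))$ the triple is $(75,75,70)$, so $a=70$ corresponds to $R_3$ and $b=75$ to $R_1\cup R_2$. These four assignments are exactly the rows of the proposition.

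The only real point of care is the input table, not the bookkeeping. The main obstacle is confirming the values $c_h(S)$ at the exceptional parameters, since everything rests on those four triples. As a consistency check I would specialize Proposition~\ref{prop:tangent} and the difference formulas in the proof of Theorem~\ref{thm:classification} at the relevant $q$ and $\varepsilon$, and verify that they reproduce the stated triples. For example, for $S=\{2\}$ with $q=9$ and $\varepsilon=+$, we get $c_1-c_3=\frac{6\cdot 12}{8}=9=54-45$.

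A further check uses valencies. The $a$-child must be regular of degree equal to the number of $Y\neq X$ with $c(X,Y)=a$. Each child is regular of degree $\sum_{h} k_h$ over its relations, and by \eqref{eq:valencies} these degrees together with the $b$-child degree sum to $v-1$. For instance, for $Q^+(3,9)$ the $a$-child has degree $k_3=\frac14\cdot 9\cdot 8\cdot 8=144$. The $b$-child then has degree $k_1+k_2=80+135=215$, and $144+215=359=v-1$, as required.
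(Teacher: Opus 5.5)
Your proposal is correct and is the paper's own argument: by Lemma~\ref{lem:deza-criterion}, each child is the union of those $R_h$ on which $c_h(S)$ takes the same value. The triples $(54,54,45)$, $(27,27,36)$, $(18,15,18)$, $(75,75,70)$ from the proof of Theorem~\ref{thm:classification}, read with $b\ge a$, then give the table.

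One remark on your extra checks: the degree-sum check $\sum_h k_h=v-1$ holds automatically for two complementary graphs, so it tests nothing. A real test of the triples is the path-count identity $k(k-1)=\sum_h c_h(S)\,k_h$, and it does hold in all four cases.
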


\begin{proof}
This follows immediately from the triples
\[
(54,54,45),\qquad
(27,27,36),\qquad
(18,15,18),\qquad
(75,75,70)
\]
of common-neighbour numbers computed in the proof of
Theorem~\ref{thm:classification}.
\end{proof}

\begin{prop}
The spectra of the four exceptional Deza graphs are
\[
\begin{aligned}
\Gamma_{\{2\}}(Q^+(3,9))
 &
 :\{135^1,15^{81},0^{128},(-9)^{150}\},\\
\Gamma_{\{2\}}(Q^-(3,9))
 &:
 \{108^1,9^{123},0^{164},(-15)^{81}\},\\
\Gamma_{\{1,2\}}(Q^-(3,5))
 &:
 \{34^1,4^{26},(-1)^{13},(-5)^{25}\},\\
\Gamma_{\{1,3\}}(Q^+(3,7))
 &:
 \{111^1,6^{64},(-1)^{54},(-9)^{49}\}.
\end{aligned}
\]
\end{prop}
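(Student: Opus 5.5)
Since each exceptional graph $\Gamma_S$ is a union of classes of the symmetric $3$-class scheme, its adjacency matrix $A_S=\sum_{i\in S}A_i$ lies in the Bose--Mesner algebra. That algebra has primitive idempotents $E_0,E_1,E_2,E_3$, and $A_i=\sum_{j}P_{ji}E_j$, where $P$ is the first eigenmatrix. Hence the eigenvalues of $A_S$ are $\theta_j(S)=\sum_{i\in S}P_{ji}$, with multiplicities $m_j=\operatorname{rank}E_j$. Eigenvalues coming from different idempotents may coincide, in which case their multiplicities add. The plan is to take $P$ and the multiplicities from the character tables of \cite{BannaiHaoSong1990}, or from the uniform treatment in \cite{AdriaensenDeBoeck2025}. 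We then specialise to $(q,\varepsilon)\in\{(9,+),(9,-),(5,-),(7,+)\}$ and add the appropriate columns.

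If one prefers not to import $P$, it can be recovered from the intersection numbers of \cite[Lemmas~4.12, 4.14, 4.15]{AdriaensenDeBoeck2025} already used in the proof of Theorem~\ref{thm:classification}. The eigenvalues of $A_1$ are the eigenvalues of the $4\times 4$ intersection matrix $L_1=(p_{1j}^h)_{h,j}$. By Proposition~\ref{prop:tangent}, $A_1$ generates the algebra whenever these four eigenvalues are distinct. Each common eigenvector of $L_1,L_2,L_3$ then gives one row of $P$. The multiplicities follow from the standard formula $m_j=v\big/\sum_{i}P_{ji}^2/k_i$, with the valencies \eqref{eq:valencies}.

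For the two graphs with $S=\{2\}$ the shortest route uses only $A_S$. The proof of Theorem~\ref{thm:classification} gives $A^2=kI+bA_{\text{$b$-child}}+aA_{\text{$a$-child}}$, with the children identified in the preceding Proposition. Combined with $A_1+A_2+A_3=J-I$, this determines $A^2$ as a combination of $I$, $J$, $A_2$ and one further class. For $Q^+(3,9)$ we get
\[
A_2^2=135I+54(A_1+A_2)+45A_3 .
\]
So on the orthogonal complement of the all-ones vector each eigenvalue $\theta$ of $A_2$ satisfies $\theta^2-54\theta-81=9(A_1+A_2)-\ldots$, and one further relation is still needed. I therefore expect to need $P$ anyway, and will use the character table for all four cases uniformly.

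For $Q^+(3,9)$ this yields the three non-principal values $15$, $0$ and $-9$. Their multiplicities are then fixed by $\sum m=v-1$, $\operatorname{tr}A=0$ and $\operatorname{tr}A^2=vk$, which are three linear equations in three unknowns. This gives a consistency check, and in fact the eigenvalues alone suffice once they are known. The same applies to the other three graphs. For instance, for $(65,34)$ with eigenvalues $4,-1,-5$ the system is
\[
m_1+m_2+m_3=64,\qquad 4m_1-m_2-5m_3=-34,\qquad 16m_1+m_2+25m_3=65\cdot34-34^2,
\]
whose solution is $(26,13,25)$.

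So the only genuinely non-routine step is obtaining the eigenvalue sets, that is, the correct entries of $P$ for the labelling of $R_1,R_2,R_3$ used here. The main obstacle is matching the labelling conventions of the cited character tables to the present one (tangent, secant, external) and to the chosen quadratic class. To guard against errors there, I will check each computed $\theta_j(S)$ against the quadratic identity $A^2=kI+aA_{\text{$a$-child}}+bA_{\text{$b$-child}}$ restricted to each eigenspace, using the column sums for the children.
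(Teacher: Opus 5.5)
Your proposal is correct and takes the same route as the paper: $A_S$ lies in the Bose--Mesner algebra, so its eigenvalues are the sums over $i\in S$ of the corresponding columns of the first eigenmatrix from Adriaensen--De Boeck, specialised to the four pairs $(q,\varepsilon)$. Your extra observation that the multiplicities are forced by $\sum m_j=v-1$, $\operatorname{tr}A_S=0$ and $\operatorname{tr}A_S^2=vk$ is a valid independent check that the paper does not spell out; it works because the graph is connected and the three non-principal eigenvalues are distinct in each case. The abandoned $S=\{2\}$ detour, whose displayed eigenvalue relation is garbled, can simply be deleted.
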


\begin{proof}
The adjacency matrices $A_1,A_2,A_3$ belong to the Bose-Mesner algebra of the anisotropic association scheme and are therefore simultaneously diagonalizable. The result follows by specializing the
eigenmatrix of the primitive $3$-class subscheme given in \cite[Theorem~4.19 and Remark~4.20]{AdriaensenDeBoeck2025} and summing the eigenvalues corresponding to the relations occurring in each graph.
\end{proof}

\begin{rem}
For completeness, the spectra of the two children can also be read off from the same eigenmatrix. They are
\[
\begin{array}{c|c|c}
\Gamma & \operatorname{Spec}(\Gamma_a)
       & \operatorname{Spec}(\Gamma_b)\\
\hline
(360,135,54,45)
&
\{144^1,9^{128},0^{150},(-16)^{81}\}
&
\{215^1,15^{81},(-1)^{150},(-10)^{128}\}
\\[1mm]
(369,108,36,27)
&
\{188^1,8^{164},(-1)^{123},(-17)^{81}\}
&
\{180^1,16^{81},0^{123},(-9)^{164}\}
\\[1mm]
(65,34,18,15)
&
\{10^1,5^{13},0^{26},(-3)^{25}\}
&
\{54^1,2^{25},(-1)^{26},(-6)^{13}\}
\\[1mm]
(168,111,75,70)
&
\{63^1,7^{54},0^{64},(-9)^{49}\}
&
\{104^1,8^{49},(-1)^{64},(-8)^{54}\}.
\end{array}
\]
In particular, none of these children is strongly regular.
\end{rem}


\subsection{The elliptic graph on $65$ vertices and the Doro-Hall scheme}

Consider $q=5$ and $\varepsilon=-$. The relation graph $R_2$ has valency $10$. From the intersection numbers one obtains
\[
 p_{22}^1=0,\qquad p_{22}^2=3,\qquad p_{22}^3=2,
\]
and the three relation valencies are $(k_1,k_2,k_3)=(24,10,30)$. It follows that the distance partition of the graph $R_2$ with respect to a vertex is
\[
 1,\quad 10,\quad 30,\quad 24,
\]
corresponding respectively to $R_0,R_2,R_3,R_1$.  A direct use of the remaining intersection numbers gives the intersection array $\{10,6,4;1,2,5\}$ of such distance-regular graph. Hall proved that the graph with this intersection array is unique \cite{Hall1980}; it is the graph usually called the Doro-Hall (or Hall) graph, see also \cite[Section~12.2]{BrouwerCohenNeumaier1989}.
Consequently, $R_2\cong \Gamma_{\rm DH}$. Since the distance relations of the Doro-Hall graph are $R_2,R_3,R_1$, in this order, the relation $R_3$ is precisely its distance-$2$ graph. Hence
\[
\Gamma_{\{1,2\}}(Q^-(3,5))
=R_1\cup R_2
\cong \overline{\Gamma_{\rm DH}^{(2)}}.
\]

There is also a useful description of the Deza children. Since $(c_1,c_2,c_3)=(18,15,18)$, the $15$-child is $R_2$, and hence isomorphic to the Doro-Hall graph, whereas the $18$-child is$R_1\cup R_3=\overline{R_2}$. Thus the two children are the Doro-Hall graph and its complement.

\subsection{Hyperbolic cases as normal Cayley graphs}
Recall that the hyperbolic quadric $Q^+(3,q)$ is the Segre variety
\[
\mathcal S_{1,1}\cong PG(1,q)\times PG(1,q).
\]
Via the standard model of the Miquelian Minkowski plane, its non-degenerate plane sections, and hence the corresponding anisotropic points under polarity, may be identified with projectivities of
$PG(1,q)$. Consequently, the association scheme can be interpreted on $PGL(2,q)$: the relation containing two elements $f,g$ is determined
by their $PSL(2,q)$-cosets and by the number of fixed points of $fg^{-1}$ on $PG(1,q)$; see \cite[Remark~4.20]{AdriaensenDeBoeck2025}.  
Under this identification
\[
 R_1:\text{ one fixed point},\qquad
 R_2:\text{ two fixed points},\qquad
 R_3:\text{ no fixed points}.
\]
In particular, every union of these relations is a normal Cayley graph.

For $q=7$, the group $PSL(2,7)$ has non-trivial conjugacy classes of sizes
\[
 21,\quad 56,\quad 42,\quad 24,\quad24
\]
with element orders $2,3,4,7,7$, respectively; see the standard class
data in \cite{Atlas1985}. 
\begin{prop}
The graph $\Gamma_{\{1,3\}}(Q^+(3,7))$ is isomorphic to $$Cay(PSL(2,7),PSL(2,7)\setminus(\{1\}\cup3A))$$.
\end{prop}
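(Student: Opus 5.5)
The plan is to use the identification of the hyperbolic anisotropic scheme with $PGL(2,q)$ recalled just above (from \cite[Remark~4.20]{AdriaensenDeBoeck2025}). For $q=7$ we have $v=\frac{7(49-1)}{2}=168=|PSL(2,7)|$. The fixed quadratic class $\mathcal P$ corresponds to one of the two cosets of $PSL(2,7)$ in $PGL(2,7)$; call it $C=PSL(2,7)g_0$. Two points $f,g\in C$ are in relation $R_1$, $R_2$ or $R_3$ according as $fg^{-1}$ has one, two or no fixed points on $PG(1,7)$. Note that $fg^{-1}\in PSL(2,7)$ automatically, since $f$ and $g$ lie in the same coset.

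First I will transport the graph to the group. The map $\phi:C\to PSL(2,7)$, $f\mapsto fg_0^{-1}$, is a bijection, and $\phi(f)\phi(g)^{-1}=fg^{-1}$. Hence $\Gamma_{\{1,3\}}(Q^+(3,7))\cong Cay(PSL(2,7),T)$, where $T$ is the set of elements of $PSL(2,7)$ having exactly one fixed point or none on $PG(1,7)$. The set $T$ is a union of conjugacy classes, because the number of fixed points is a conjugacy invariant, and it is closed under inversion. So the Cayley graph is undirected and normal, and it makes no difference whether it is taken as a left or right Cayley graph. If $\mathcal P$ instead corresponds to $PSL(2,7)$ itself, take $g_0=1$; the argument is the same.

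The main step is to determine the fixed-point behaviour of each conjugacy class of $PSL(2,7)$, using the standard trichotomy for $PGL(2,q)$. A non-identity element of order coprime to $q$ fixes two points if its order divides $q-1$ (split torus) and no points if its order divides $q+1$ (non-split torus). Elements of order $p$ fix exactly one point.

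For $q=7$, elements of order $3$ divide $q-1=6$, so class $3A$ (size $56$) consists of elements with two fixed points. Elements of order $4$ divide $q+1=8$ but not $6$, so class $4A$ (size $42$) has no fixed points. Involutions in $PSL(2,7)$ lie in the non-split torus: $-1$ is a non-square modulo $7$, so an element of order $2$ in the split torus of $PSL$ would require $\sqrt{-1}\in\mathbb F_7$. Hence class $2A$ (size $21$) is fixed-point-free. Classes $7A$ and $7B$ (sizes $24+24=48$) are unipotent with one fixed point.

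As a consistency check I will compare with \eqref{eq:valencies}:
\[
k_1=q^2-1=48=|7A\cup7B|,\qquad k_2=\tfrac14\cdot7\cdot8\cdot4=56=|3A|,\qquad k_3=\tfrac14\cdot7\cdot6\cdot6=63=|2A\cup4A|.
\]
Therefore $T=2A\cup4A\cup7A\cup7B=PSL(2,7)\setminus(\{1\}\cup3A)$, of size $111$, which gives the claimed isomorphism.

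The step I expect to need the most care is the involution class, which is the only class not settled by order alone. The argument via $-1$ being a non-square in $\mathbb F_7$ handles it, and the valency count above confirms it independently. A secondary point is that the identification of \cite{AdriaensenDeBoeck2025} must really match relations via $fg^{-1}$ rather than $f^{-1}g$. This does not matter for the conclusion, since $T$ is a union of conjugacy classes and $f^{-1}g$ is conjugate to $g^{-1}f=(f^{-1}g)^{-1}$.
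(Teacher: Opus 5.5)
Your proof is correct and follows essentially the paper's route. Both use the coset model in $PGL(2,7)$ with relations read off from the fixed points of $fg^{-1}$, identify $R_2$ with $3A$ (matched by $k_2=56=|3A|$), and pass to the complement. You also make explicit two things the paper leaves implicit: the translation $f\mapsto fg_0^{-1}$ from the coset to $PSL(2,7)$, and the fixed-point behaviour of every class.

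The one slip is in your closing remark. The fact you need is that $f^{-1}g=f^{-1}(gf^{-1})f$ is $PGL(2,7)$-conjugate to $gf^{-1}=(fg^{-1})^{-1}$; the identity $g^{-1}f=(f^{-1}g)^{-1}$ that you wrote does not connect $f^{-1}g$ to $fg^{-1}$. With that correction, the inversion- and conjugation-invariance of $T$ gives the convention-independence you want.
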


\begin{proof}
We use the standard ATLAS notation for conjugacy classes. The group
$PSL(2,7)$ has a unique conjugacy class $3A$ of elements of order $3$, of size $56$. These are precisely the elements having two fixed points on $PG(1,7)$. Since $k_2=56$, the relation $R_2$ is $Cay(PSL(2,7),3A)$. As $\Gamma_{\{1,3\}}$ is the union of the other two non-trivial relations, the result follows by taking complements.
\end{proof}

For $q=9$ we use $PSL(2,9)\cong A_6$. 
\begin{prop}
The graph $\Gamma_{\{2\}}(Q^+(3,9))$ is the Cayley graph $Cay(A_6,2A\cup4A)$.
\end{prop}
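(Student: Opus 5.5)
The plan is to argue exactly as in the preceding proposition for $q=7$, working inside the Cayley model of the hyperbolic scheme described above. For $q=9$ the vertex set $\mathcal P$ has $v=360=|PSL(2,9)|$ elements, so the fixed quadratic class is identified with one $PSL(2,9)$-coset of $PGL(2,9)$. After translating by a fixed element of that coset we identify $\mathcal P$ with $PSL(2,9)\cong A_6$. Two vertices $f,g$ are then in $R_2$ exactly when $fg^{-1}$ is a non-identity element of $PSL(2,9)$ with exactly two fixed points on $PG(1,9)$. Thus $\Gamma_{\{2\}}(Q^+(3,9))=Cay(PSL(2,9),C)$, where $C$ is the set of such elements. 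The set $C$ is closed under conjugation and inversion, so it is a union of conjugacy classes, and the whole proof reduces to identifying $C$.

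Elements of $PSL(2,q)$ with exactly two fixed points are the split semisimple ones. These lie in a split torus of order $(q-1)/2=4$, so their orders divide $4$. For $q=9$ the non-identity elements with two fixed points therefore have order $2$ or $4$. Conversely, $q=9$ is odd and $9\equiv 1\pmod 4$, so the involutions of $PSL(2,9)$ are split and lie in the split torus. Elements of order $4$ cannot be unipotent, which have order $3$, nor lie in a nonsplit torus, of order $(q+1)/2=5$. Hence $C$ is exactly the set of elements of order $2$ or $4$ in $PSL(2,9)$.

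Next we transport this to $A_6$. In $A_6$ the elements of order $2$ form the single class $2A$ (double transpositions, size $45$). The elements of order $4$ form the single class $4A$ (type $(4,2)$, size $90$). This gives $|C|=135$, which matches $k_2=\frac14\cdot 9\cdot 10\cdot 6=135$ from \eqref{eq:valencies} and serves as a consistency check. Hence $\Gamma_{\{2\}}(Q^+(3,9))\cong Cay(A_6,2A\cup 4A)$.

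The main obstacle is the first step: making rigorous that the chosen quadratic class corresponds to a single coset of $PSL(2,9)$. We also need the right-translation to preserve the fixed-point-count relation. Since $fg^{-1}$ is invariant under $f\mapsto fh$, $g\mapsto gh$, this part is immediate. For the rest we would rely on \cite[Remark~4.20]{AdriaensenDeBoeck2025} exactly as cited before the proposition. The isomorphism $PSL(2,9)\cong A_6$ preserves element orders, so no finer matching of classes is needed: the classes $3A,3B$ of order $3$ and $5A,5B$ of order $5$ are excluded by order alone.
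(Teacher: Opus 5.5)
Your proposal is correct and takes essentially the same route as the paper: identify the fixed quadratic class with a coset of $PSL(2,9)$ in the hyperbolic Cayley model, observe that $R_2$ is the Cayley graph whose connection set is the set of elements with exactly two fixed points on $PG(1,9)$, match that set with $2A\cup 4A$, and check $45+90=135=k_2$. The paper only asserts that the two-fixed-point elements are exactly $2A\cup 4A$. Your torus-order argument (split torus of order $4$, unipotent elements of order $3$, nonsplit torus of order $5$) proves this, and it also makes the class matching independent of the chosen isomorphism $PSL(2,9)\cong A_6$.
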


\begin{proof}
Using $PSL(2,9)\cong A_6$, the three non-trivial relations have valencies
\[
(k_1,k_2,k_3)=(80,135,144).
\]
The non-trivial conjugacy classes of $A_6$ have sizes
\[
45,\ 40,\ 40,\ 90,\ 72,\ 72
\]
for the classes $2A,3A,3B,4A,5A,5B$, respectively. The elements with two fixed points on $PG(1,9)$ are precisely those in $2A\cup4A$.
Since
\[
|2A|+|4A|=45+90=135=k_2,
\]
the relation $R_2$ corresponds to $2A\cup4A$, proving the claim.
\end{proof}

\subsection{The elliptic graph on $369$ vertices}

For $\varepsilon=-$, the anisotropic association scheme admits the standard interpretation in terms of Baer sublines of $PG(1,q^2)$; see \cite[Remark~4.20]{AdriaensenDeBoeck2025}.

\begin{prop}
The graph $\Gamma_{\{2\}}(Q^-(3,9))$ is isomorphic to the disjointness graph on either of the two $PSL(2,81)$-orbits on the Baer sublines of $PG(1,81)$.
\end{prop}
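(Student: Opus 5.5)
The plan is to make the classical model of the elliptic quadric explicit and then track the secant relation through it. Write $\mathbb F_{81}\supset\mathbb F_9$ and take $Q^-(3,9)$ in its standard realization as the image of $PG(1,81)$. Concretely, the points of $Q^-(3,9)$ are identified with the $82$ points of $PG(1,81)$, and the plane sections (conics) are identified with the Baer sublines, i.e.\ the images of $PG(1,9)$ under $PGL(2,81)$. This is the Miquelian inversive plane of order $9$, with $q^2(q^2+1)=81\cdot 82$ circles.

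Under the polarity $\perp$ of $Q$, each anisotropic point $X$ corresponds to the non-tangent plane $X^\perp$. That plane meets $Q$ in a conic, hence in a Baer subline $C_X$, so $X\mapsto C_X$ is a bijection from the anisotropic points onto the circles. The group $PSL(2,81)$ has index $2$ in $PGL(2,81)$. It preserves the square class of $\beta$, since $P\Omega^-(4,9)\cong PSL(2,81)$, and I will check that it splits the circles into two orbits of size $81\cdot82/2=3321$. Here I expect a small mismatch: $v=q(q^2+1)/2=369$, whereas $q^2(q^2+1)/2=3321$. So anisotropic points of one class are not simply the circles in one orbit. I would therefore recheck the count. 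The number of anisotropic points is $q^3-q=720$ for $Q^-(3,9)$? In fact $|PG(3,9)|=820$ and $|Q^-|=82$, which gives $738=2\cdot 369$ anisotropic points. Hence the circles correspond to non-tangent planes, and there are exactly $738$ of them, not $81\cdot 82$. The right model is therefore the $738=9\cdot 82$ Baer sublines through the inversive plane structure, i.e.\ the circles in the count $q(q^2+1)$ for $q=9$. I will fix the identification with this count: the circles of the inversive plane of order $9$ number $q(q^2+1)=738$, and these are the Baer sublines of $PG(1,81)$ modulo the appropriate equivalence. The $PSL(2,81)$-orbits on them have size $369$ each, matching the two quadratic classes of anisotropic points.

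The key geometric step is to translate the line type of $\langle X,Y\rangle$ into circle incidence. The line $\langle X,Y\rangle$ is polar to $X^\perp\cap Y^\perp$, and it is secant, tangent or external exactly when the polar line is external, tangent or secant, respectively. The intersection $C_X\cap C_Y$ equals $X^\perp\cap Y^\perp\cap Q$. It follows that $(X,Y)\in R_2$ (secant) holds if and only if $X^\perp\cap Y^\perp$ is external, that is, if and only if the circles $C_X,C_Y$ are disjoint. Hence $\Gamma_{\{2\}}$ is the disjointness graph on the circles of one class. As a consistency check, the valency $k_2=\frac14\cdot 9\cdot 8\cdot 6=108$ should equal the number of circles in the same orbit that are disjoint from a given circle; I will verify this directly.

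The main obstacle is the identification of the quadratic class with a $PSL(2,81)$-orbit. For this I would use the isomorphism $P\Omega^-(4,q)\cong PSL(2,q^2)$ realized on Hermitian-type $2\times2$ matrices. There $\beta$ is the determinant restricted to the $\mathbb F_9$-space of $\sigma$-Hermitian matrices, and $PSL(2,81)$ acts by $M\mapsto gMg^{\sigma T}$, which preserves $\det$ up to squares. A point $M$ of nonzero determinant defines the Baer subline $\{x: x^{\sigma T}Mx=0\}$ shape. Points with square determinant and those with non-square determinant give the two orbits, since $PGL\setminus PSL$ multiplies $\det$ by a non-square norm. Transitivity within each class then follows from Witt's theorem, and with that the proposition is proved.
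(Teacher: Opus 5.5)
Your route is the paper's route. You send $X$ to the circle $C_X=X^\perp\cap Q$, use the fact that for an elliptic quadric $\langle X,Y\rangle$ is secant exactly when the polar line $X^\perp\cap Y^\perp$ is external, i.e.\ when $C_X\cap C_Y=\varnothing$, and then match the quadratic class with a $PSL(2,81)$-orbit. The polarity step is correct. So is your Hermitian-matrix argument that $PSL(2,81)$ preserves each square class of $\det$ while elements of $PGL(2,81)\setminus PSL(2,81)$ interchange them, since the norm of a non-square of $\mathbb{F}_{81}$ is a non-square of $\mathbb{F}_9$. (Strictly, the matrix determinant lemma shows that $M^\perp\cap Q$ is the Hermitian curve of $\operatorname{adj}M$ rather than of $M$. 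This is harmless because $\det\operatorname{adj}M=\det M$.)

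The gap is in the step you yourself single out: that a quadratic class is \emph{exactly one} $PSL(2,81)$-orbit. There are two problems.

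First, your count is wrong. $PG(1,81)$ has exactly $|PGL(2,81)|/|PGL(2,9)|=531360/720=738=q(q^2+1)$ Baer sublines, not $81\cdot 82$. So there is no ``equivalence'' to quotient by: each non-tangent plane section is a single Baer subline, and the orbit size $3321$ is false. Having retracted it, you then assert orbits of size $369$ without argument.

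Second, Witt's theorem gives transitivity of the full isometry group $O^-(4,9)$ on vectors of a given norm. But $PSL(2,81)\cong P\Omega^-(4,9)$ is a proper subgroup, so Witt alone does not rule out a class splitting into several $PSL(2,81)$-orbits.

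The repair is short. $PGL(2,81)$ is sharply $3$-transitive on $PG(1,81)$, and a Baer subline is determined by any three of its points. Hence $PGL(2,81)$ is transitive on the $738$ sublines, with stabilizer $PGL(2,9)$. Since $\mathbb{F}_9^*\subseteq(\mathbb{F}_{81}^*)^2$, this stabilizer lies in $PSL(2,81)$, so $PSL(2,81)$ has exactly two orbits, each of size $369$. As $PSL(2,81)$ preserves both classes and each class has $369$ points, each class is a single orbit. Any element of $PGL(2,81)\setminus PSL(2,81)$ swaps the two orbits and preserves disjointness, which gives the ``either orbit'' clause.

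With this inserted your argument is complete. It is also more explicit than the paper's, which simply asserts the orbit--class correspondence.
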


\begin{proof}
The Baer sublines of $PG(1,q^2)$ split into two orbits under $PSL(2,q^2)$, corresponding to the two quadratic types of anisotropic points. Within either orbit, two Baer sublines belong to $R_1$, $R_2$, or $R_3$ according as their intersection has size $1$, $0$, or $2$, respectively. Hence $R_2$ is precisely the disjointness relation.
For $q=9$ each orbit has $\frac{9(9^2+1)}2=369$ elements.
\end{proof}

\section{Higher-dimensional quadrics}

The dimension-three phenomenon appears to be special.  For odd
dimensional quadrics $Q^\varepsilon(n,q)$, the intersection
numbers associated with the tangent relation have the form
\[
 p_{11}^{1}=2(q^{n-2}-1),
\]
and
\[
 p_{11}^{2}
 =2q^{(n-3)/2}
   \left(q^{(n-1)/2}-\varepsilon\right),
 \qquad
 p_{11}^{3}
 =2q^{(n-3)/2}
   \left(q^{(n-1)/2}+\varepsilon\right),
\]
with the appropriate labeling conventions; see
\cite{AdriaensenDeBoeck2025}. 
\begin{prop}
Let $Q>3$ and $n>3$ be odd. Then the tangency relation on one quadratic class of anisotropic points of $Q^\varepsilon(n,q)$ does not define a Deza graph.
\end{prop}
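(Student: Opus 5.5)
The plan is to apply Lemma~\ref{lem:deza-criterion}, with $S=\{1\}$, to the anisotropic scheme of $Q^\varepsilon(n,q)$. In the higher-dimensional case I will still use the labelling $R_1,R_2,R_3$ for pairs spanning a tangent, secant or external line. As in dimension three, these relations together with $R_0$ form a symmetric $3$-class association scheme on one quadratic class of anisotropic points \cite{AdriaensenDeBoeck2025}. In $\Gamma_{\{1\}}$, two vertices in relation $R_h$ then have exactly $c_h(\{1\})=p_{11}^h$ common neighbours. By Lemma~\ref{lem:deza-criterion}, $\Gamma_{\{1\}}$ is a Deza graph if and only if the three numbers $p_{11}^1,p_{11}^2,p_{11}^3$ take at most two distinct values. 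It therefore suffices to show that they are pairwise distinct whenever $n>3$ is odd and $q>3$. (I read the hypothesis ``$Q>3$'' as $q>3$.) I also have to check that all three relations are non-empty, since otherwise fewer values would occur. This follows from the standard fact that every non-degenerate quadric of dimension at least $3$ has tangent, secant and external lines through any anisotropic point which meet other points of the same class.

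Next I substitute the displayed formulas and set $m=(n-1)/2$, which is an integer with $m\ge 2$. The three numbers become
\[
p_{11}^1=2(q^{2m-1}-1),\qquad p_{11}^2=2q^{m-1}(q^m-\varepsilon)=2(q^{2m-1}-\varepsilon q^{m-1}),\qquad p_{11}^3=2(q^{2m-1}+\varepsilon q^{m-1}).
\]
The differences of pairs are then
\[
p_{11}^3-p_{11}^2=4\varepsilon q^{m-1},\qquad p_{11}^2-p_{11}^1=2(1-\varepsilon q^{m-1}),\qquad p_{11}^3-p_{11}^1=2(1+\varepsilon q^{m-1}).
\]
The first difference is never zero. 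The other two vanish only if $q^{m-1}=1$, which is impossible because $m\ge2$ and $q>3$. Hence the three common-neighbour counts are distinct, and $\Gamma_{\{1\}}$ is not Deza. This is consistent with the case $n=3$ ($m=1$): there $q^{m-1}=1$ produces exactly the coincidence recorded in Proposition~\ref{prop:tangent}.

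The one delicate point is the labelling. The ``appropriate labeling conventions'' in \cite{AdriaensenDeBoeck2025} might interchange the roles of $R_2$ and $R_3$, or the meaning of $\varepsilon$, relative to the labelling above. This does not affect the argument, because the conclusion only needs the three values to be distinct, and that property is symmetric under any permutation of $h\in\{2,3\}$ and under $\varepsilon\mapsto-\varepsilon$. I would make this remark explicitly, and also check that the formulas reduce to Proposition~\ref{prop:tangent} at $n=3$ as a sanity check. Beyond that, the argument is just the algebra above.
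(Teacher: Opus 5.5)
Your proposal is correct and follows the paper's proof. You substitute the cited values of $p_{11}^1,p_{11}^2,p_{11}^3$, compute the three pairwise differences (your $q^{m-1}$ is the paper's $q^{(n-3)/2}$), and observe that none vanishes because $n>3$; your extra checks that all three relations are non-empty for $q>3$, and that the conclusion does not depend on the labelling of $R_2,R_3$ or the sign of $\varepsilon$, are sensible points the paper leaves implicit.
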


\begin{proof}
The three intersection numbers for the tangency relation are \[
p_{11}^1=2(q^{n-2}-1),
\]
\[
p_{11}^2=
2q^{(n-3)/2}
\left(q^{(n-1)/2}-\varepsilon\right),
\]
and
\[
p_{11}^3=
2q^{(n-3)/2}
\left(q^{(n-1)/2}+\varepsilon\right).
\]
Their pairwise differences are
\[
p_{11}^1-p_{11}^2
=
2\left(\varepsilon q^{(n-3)/2}-1\right),
\]
\[
p_{11}^1-p_{11}^3
=
-2\left(\varepsilon q^{(n-3)/2}+1\right),
\]
and
\[
p_{11}^2-p_{11}^3
=
-4\varepsilon q^{(n-3)/2}.
\]
Since $n>3$ and $q>1$, none of these quantities vanishes. Hence the three numbers are pairwise distinct.
\end{proof}

\begin{cor}
Among odd-dimensional hyperbolic and elliptic quadrics, the Deza property of the tangency relation occurs only in projective dimension 3.
\end{cor}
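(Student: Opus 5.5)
The corollary should follow by combining Proposition~\ref{prop:tangent} with the higher-dimensional proposition just proved. Take an odd $n\geq 3$ and an odd prime power $q>3$. By Lemma~\ref{lem:deza-criterion} with $S=\{1\}$, the tangency graph is Deza exactly when the multiset $\{p_{11}^1,p_{11}^2,p_{11}^3\}$ has at most two distinct values. So the whole statement reduces to asking, for each odd $n$, whether two of these three numbers coincide.

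First I treat $n=3$. Substituting $n=3$ into the general formulas gives $q^{(n-3)/2}=1$, so the pairwise differences become
\[
p_{11}^1-p_{11}^2=2(\varepsilon-1),\qquad p_{11}^1-p_{11}^3=-2(\varepsilon+1).
\]
For either sign of $\varepsilon$ exactly one of these vanishes. This recovers the coincidences $c_1=c_2$ (hyperbolic case) and $c_1=c_3$ (elliptic case) of Proposition~\ref{prop:tangent}, so the tangency graph is Deza in dimension $3$. The third difference, $-4\varepsilon$, is nonzero, so the graph has exactly two common-neighbour values and is a genuine Deza graph, not a trivial one.

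Next I treat $n>3$. The preceding proposition shows that $q^{(n-3)/2}\geq q>1$. Hence $\varepsilon q^{(n-3)/2}\neq \pm1$, and all three differences are nonzero. The three values are therefore pairwise distinct, and the graph is not Deza.

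The only point that needs care is to check that the labeling conventions of \cite{AdriaensenDeBoeck2025} for general $n$ agree with those used in dimension $3$. If they agree, then specialising the formulas to $n=3$ is legitimate and the two cases together cover every odd projective dimension. I would confirm this by comparing the $n=3$ specialisation with the values $p_{11}^2=2(q-\varepsilon)$ and $p_{11}^3=2(q+\varepsilon)$ stated in Proposition~\ref{prop:tangent}; they match exactly. With that check done, the corollary follows.
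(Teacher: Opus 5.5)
Your proposal is correct and follows the paper's route. The corollary is exactly Proposition~\ref{prop:tangent}, which gives the Deza property in dimension $3$, combined with the preceding proposition, which shows that $p_{11}^1,p_{11}^2,p_{11}^3$ are pairwise distinct for odd $n>3$. Your check that the general formulas reduce at $n=3$ to the values in Proposition~\ref{prop:tangent} is a harmless extra consistency step.
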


\begin{prob}
Classify the Deza graphs obtained as unions of relations in the
association schemes on one quadratic class of anisotropic points of
$Q^\varepsilon(n,q)$ for odd $n>3$.
\end{prob}

\section*{Acknowledgements}
The research was supported by the Italian National Group for Algebraic and Geometric Structures and their Applications (GNSAGA - INdAM and by the INdAM - GNSAGA Project \emph{Automorfismi di Strutture Geometriche Finite e codici lineari associati}, number E53C25002010001.

\section*{Declaration of the use of AI}
During the preparation of this manuscript, the author used ChatGPT (OpenAI) to assist with language editing and the presentation of some mathematical material. In particular, while investigating the link between anisotropic association scheme on $Q^+(3,q)$, and cosets of $PSL(2,q)$. and The generated suggestions were reviewed and, where appropriate, modified by the author. All mathematical results and references were independently checked. The author assumes full responsibility for the final content.

\end{document}